\newtheorem{theorem}{Theorem}
\newtheorem{lemma}[theorem]{Lemma}
\newtheorem{corollary}[theorem]{Corollary}
\numberwithin{equation}{section}
\begin{document}

\title[Power-type cancellation for the simplex Hilbert transform]{Power-type cancellation for the simplex Hilbert transform}

\author[P. Durcik]{Polona Durcik}
\address{Polona Durcik, Mathematisches Institut, Universit\"at Bonn, Endenicher Allee 60, 53115 Bonn, Germany}
\email{durcik@math.uni-bonn.de}

\author[V. Kova\v{c}]{Vjekoslav Kova\v{c}}
\address{Vjekoslav Kova\v{c}, Department of Mathematics, Faculty of Science, University of Zagreb, Bijeni\v{c}ka cesta 30, 10000 Zagreb, Croatia}
\email{vjekovac@math.hr}

\author[C. Thiele]{Christoph Thiele}
\address{Christoph Thiele, Mathematisches Institut, Universit\"at Bonn, Endenicher Allee 60, 53115 Bonn, Germany}
\email{thiele@math.uni-bonn.de}

\date{\today}

\begin{abstract}
We prove $\textup{L}^p$ bounds for the truncated simplex Hilbert transform which grow with a power less than one of the truncation range in the logarithmic scale.
\end{abstract}

\maketitle


\section{Introduction}
The simplex Hilbert transform of degree $n\geq 1$ is given by
\begin{align*}
\Lambda_n:= \textup{p.v.} \int_{\mathbb{R}^{n+1}}\prod_{i=0}^{n} F_i(x_0,\dots,x_{i-1},x_{i+1},\dots,x_n)\frac{1}{x_0+\dots + x_n }dx_0 \dots dx_n .
\end{align*}
It is a multilinear form in the $n+1$ functions $F_0,\dots ,F_n$, which for simplicity we assume to be in the Schwartz class.
If $n=1$, then the simplex Hilbert transform is the form obtained by dualization of the classical Hilbert transform.
The case $n=2$ was called the triangular Hilbert transform in \cite{ktz:tht}.
A major open problem is whether for $n\geq 2$ the simplex Hilbert transform satisfies any $\textup{L}^p$ bounds of the type
\[ |\Lambda_n| \leq C \prod_{i=0}^n
 \|F_i\|_{p_i} .\]
Partial progress in the case $n=2$ was made in \cite{ktz:tht} for a dyadic model and under the
additional assumption that one of the functions $F_i$ takes certain special forms.

The papers \cite{tt:mht} and \cite{pz:splx} initiated the study of growth of the bounds for the truncated simplex Hilbert transform
\begin{align*}
\Lambda_{n,r,R} := \int_{r\leq |x_0+\dots + x_n| \leq R }\prod_{i=0}^{n} F_i(x_0,\dots,x_{i-1},x_{i+1},\dots,x_n)\frac{1}{x_0+\dots + x_n}dx_0 \dots dx_n
\end{align*}
for some truncation parameters $0<r<R$.
The trivial estimate
\begin{align}\label{est:trv}
|\Lambda_{n,r,R} | \leq 2 \Big( \log \frac{R}{r} \Big)\prod_{i=0}^n \|F_i\|_{p_i}
\end{align}
with Banach space exponents $1\leq p_i \leq \infty$ satisfying the H\"older scaling $\sum_{i=0}^n 1/p_i=1$
follows by substituting $x_0=x-x_1-\dots-x_n$, applying H\"older's inequality in $x_1,\dots,x_n$, and integrating in $x$.
Alternatively, if one is careless about the actual constant $2$, one can simply break the kernel into about $\log(R/r)$ many scales and estimate each scale separately.

Using techniques from additive combinatorics, Zorin-Kranich \cite{pz:splx} improved this bound to $o(\log (R/r))$ when $R/r\rightarrow \infty$ in the open range $1<p_i<\infty$ with the H\"older scaling. A special case of this result was shown before by Tao \cite{tt:mht}.

The main result of this paper is the following bound.
\begin{theorem} \label{thm:mainthm}
There exists a finite constant $C$ depending only on $n$ such that for any Schwartz functions $F_0,\dots, F_n$ on $\mathbb{R}^n$ and any $0<r<R$ we have \begin{align}\label{est:mainthm} |\Lambda_{n,r,R} | \leq C\Big(\log \frac{R}{r} \Big)^{1-2^{-n+1}} \|F_0\|_{{2^n}} \prod_{i=1}^n\|F_i\|_{{2^{n-i+1}}} .
\end{align}
\end{theorem}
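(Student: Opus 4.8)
The plan is to prove \eqref{est:mainthm} by induction on the degree $n$, the base case $n=1$ being the classical fact that the truncated Hilbert transform is bounded on $\mathrm{L}^2$ uniformly in the truncation parameters; this is exactly \eqref{est:mainthm} for $n=1$, since there $1-2^{-n+1}=0$. Writing $\alpha_n:=1-2^{-n+1}$, one has the recursion $\alpha_n=\tfrac12(1+\alpha_{n-1})$, which dictates the shape of the inductive step: it suffices to establish, with a constant $C_n$ depending only on $n$,
\[
\big|\Lambda_{n,r,R}[F_0,\dots,F_n]\big|^{2}\le C_n\Big(\log\tfrac Rr\Big)\,\|F_n\|_2^2\,\sup\big|\Lambda_{n-1,r',R'}[G_0,\dots,G_{n-1}]\big|,
\]
where the supremum is over $0<r'<R'$ with $\log(R'/r')\lesssim\log(R/r)$ and over Schwartz tuples $(G_j)_{j=0}^{n-1}$ each of the form $G_j(\cdot)=F_j(\cdot)\,\overline{F_j(\cdot-v_j)}$ for suitable translations $v_j$, so that Hölder's inequality yields $\|G_0\|_{2^{n-1}}\le\|F_0\|_{2^{n}}^{2}$ and $\|G_j\|_{2^{n-j}}\le\|F_j\|_{2^{n-j+1}}^{2}$ for $1\le j\le n-1$. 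Feeding the inductive hypothesis $|\Lambda_{n-1,r',R'}[\vec G]|\le C_{n-1}(\log\tfrac{R'}{r'})^{\alpha_{n-1}}\|G_0\|_{2^{n-1}}\prod_{j=1}^{n-1}\|G_j\|_{2^{n-j}}$ into this inequality and taking a square root produces \eqref{est:mainthm} with the exponent $\tfrac12(1+\alpha_{n-1})=\alpha_n$; the Lebesgue exponents line up because the degree-$n$ tuple $(2^n,2^n,2^{n-1},\dots,4,2)$ is the degree-$(n-1)$ tuple $(2^{n-1},2^{n-1},2^{n-2},\dots,4,2)$ with every entry doubled, adjoined with one further $\mathrm{L}^2$-slot, the latter occupied by $F_n$. (Iterating, $F_i$ survives $n-i$ of these pairings before being spent in an $\mathrm{L}^2$-slot, which is why it enters with exponent $2^{n-i+1}$, while $F_0,F_1$ survive all $n-1$ pairings down to the degree-$1$ form.)

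To produce the displayed estimate I would first reduce, as in the derivation of \eqref{est:trv}, to $r=1$, $R=2^N$ with $N\in\mathbb N$, and split the kernel as $\tfrac1t\,\mathbbm{1}_{\{1\le|t|\le 2^N\}}=\sum_{k=0}^{N-1}\psi_k(t)$, each $\psi_k$ a smooth odd bump adapted to the scale $2^k$, normalized in $\mathrm{L}^1$, and of vanishing integral. Substituting $w=x_0+\dots+x_n$ for the variable $x_n$ rewrites $\Lambda_{n,1,2^N}$ as $\int_{\mathbb R^n}F_n(x_0,\dots,x_{n-1})\,\mathcal G(x_0,\dots,x_{n-1})\,dx$ with
\[
\mathcal G(x_0,\dots,x_{n-1})=\int_{\{1\le|w|\le 2^N\}}\frac1w\prod_{i=0}^{n-1}F_i\big(x_0,\dots,x_{i-1},x_{i+1},\dots,x_{n-1},\,w-x_0-\cdots-x_{n-1}\big)\,dw,
\]
so Cauchy--Schwarz gives $|\Lambda_{n,1,2^N}|\le\|\mathcal G\|_2\|F_n\|_2$ and reduces everything to the bound $\|\mathcal G\|_2^2\lesssim N\,\sup|\Lambda_{n-1,\cdot,\cdot}[\vec G]|$. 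For this, expand $\|\mathcal G\|_2^2$ as a double integral over two copies $w,w'$ of the barycentric variable, change variables to isolate the offset $v=w-w'$, and collapse the $n$ conjugate pairs $F_i(\cdots,w-\sigma)\overline{F_i(\cdots,w'-\sigma)}$ into single Schwartz functions $G_i^{(v)}$ (which obey the Hölder bounds stated above); the resulting object is governed, for each fixed $v$, by truncated degree-$(n-1)$ simplex forms in the $G_i^{(v)}$, while the mean-zero and smoothness properties of the pieces $\psi_k$ prevent the two copies of the $\log$-long kernel from producing the naive quadratic factor $N^2$, leaving only the single factor $N=\log 2^N$ — whence the inductive bound $|\Lambda_{n-1}|\lesssim N^{\alpha_{n-1}}(\cdots)$ gives $\|\mathcal G\|_2^2\lesssim N^{1+\alpha_{n-1}}(\cdots)$, i.e.\ exponent $\alpha_n$ after a square root.

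The step I expect to be genuinely difficult is exactly this analysis of $\|\mathcal G\|_2^2$. A product of two truncated Hilbert kernels is not itself a truncated simplex kernel, so one must (i) exploit the cancellation and smoothness of the $\psi_k$ to show that the scale interactions decay sharply enough to replace $N^2$ by $N$ while preserving the sub-logarithmic power; and (ii) identify the surviving contribution, after the change of variables, with honest truncated degree-$(n-1)$ simplex forms over $\lesssim N$ scales — in particular verifying that the collapsed functions $G_i^{(v)}$ sit in the correct simplex incidence pattern on the remaining $n$ variables, so that the inductive hypothesis is applicable. Secondary bookkeeping points are to confirm that the $G_i^{(v)}$ are Schwartz with the stated Hölder bounds (immediate), that all Lebesgue exponents remain in the Banach range $[1,\infty]$ along the iteration (automatic, since every exponent produced is a power of $2$ at least $2$), and that the accumulated constants depend only on $n$, which amounts to tracking the $O(1)$ losses in the dyadic rounding and in the summation over scales.
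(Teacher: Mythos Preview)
Your high-level architecture is right: the exponent $\alpha_n=1-2^{-n+1}$ satisfies $\alpha_n=\tfrac12(1+\alpha_{n-1})$, so a scheme that squares the form, extracts one factor of $\log(R/r)$, and invokes a bound carrying exponent $\alpha_{n-1}$ is exactly what is needed, and your Lebesgue-exponent bookkeeping is correct. The gap is in step~(ii), which you flag as delicate but treat as bookkeeping; in fact the claimed reduction to $\Lambda_{n-1}$ is false.

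After Cauchy--Schwarz eliminating $F_n$ and expanding $\|\mathcal G\|_2^2$, each collapsed function
\[
G_i^{(v)}\big(x_0,\dots,\widehat{x_i},\dots,x_{n-1},u\big)=F_i(\dots,u)\,\overline{F_i(\dots,u-v)}
\]
depends on $n$ variables: the $n-1$ variables $x_0,\dots,\widehat{x_i},\dots,x_{n-1}$ \emph{and} the last slot $u=w-\sigma$. Every $G_i^{(v)}$, $i=0,\dots,n-1$, contains $u$; none omits it. This is not the degree-$(n-1)$ simplex incidence pattern, in which each of the $n$ functions omits a \emph{distinct} variable. What you actually obtain is a \emph{prism} over the $(n-1)$-simplex: the face of the original simplex opposite $F_n$, doubled along the $u$-direction. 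The inductive hypothesis for $\Lambda_{n-1}$ therefore does not apply, and there is no change of variables that removes the shared dependence on $u$ without destroying the remaining simplex structure among $x_0,\dots,x_{n-1}$.

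This is exactly why the paper does \emph{not} induct on $n$. It fixes $n$ and introduces auxiliary forms $\Lambda^k$, $\widetilde\Lambda^k$ for $1\le k\le n$, built from function patterns $\mathcal F^k$ that interpolate between the $n$-simplex ($k=n$) and the $n$-cube ($k=1$). One Cauchy--Schwarz lowers $k$ by one (your computation is precisely the step $k=n\to k=n-1$), and then an integration-by-parts identity for Gaussians is used to move the cancellative bump $h$ onto an appropriate face of the new polytope so that the next Cauchy--Schwarz can be applied. The base case $k=1$ is a cube form bounded \emph{uniformly} (exponent $0$), established by a separate square-function argument related to the twisted paraproduct; it is not the classical Hilbert transform. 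Your item~(i), replacing $N^2$ by $N$ while keeping a usable structure, is likewise not soft: it is exactly what this telescoping identity accomplishes, and without it the kernel $\tfrac{1}{w}\cdot\tfrac{1}{w'}$ does not reorganize into anything resembling a simplex kernel. In short, the recursion $\alpha_n=\tfrac12(1+\alpha_{n-1})$ is realized by descent in the auxiliary parameter $k$ from $n$ to $1$, not by descent in the degree $n$.
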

By interpolation of \eqref{est:mainthm} with \eqref{est:trv} we obtain the following corollary.
\begin{corollary} \label{cor:maincor}
Let $1<p_0,\dots, p_n<\infty$ and $ 1/p_0 + \dots + 1/p_n =1$.
There exist a finite constant $C$ and a number $\epsilon>0$, both depending only on $n$ and $p_0,\dots, p_n$,
such that for any Schwartz functions $F_0,\dots, F_n$ on $\mathbb{R}^n$ and any $0<r<R$ we have
\[ |\Lambda_{n,r,R} | \leq C \Big( \log \frac{R}{r} \Big)^{1-\epsilon} \prod_{i=0}^n \|F_i\|_{p_i} . \]
\end{corollary}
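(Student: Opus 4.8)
Corollary~\ref{cor:maincor} follows from Theorem~\ref{thm:mainthm} by interpolating \eqref{est:mainthm} against the trivial bound \eqref{est:trv} at a family of exponent tuples surrounding a given admissible $(p_0,\dots,p_n)$, which supplies the gain $(\log(R/r))^{1-\epsilon}$ throughout the open Banach range. The real content is Theorem~\ref{thm:mainthm}, which the plan is to prove by induction on $n$. Write $L:=\log(R/r)$ and $F_i(\widehat x_i):=F_i(x_0,\dots,x_{i-1},x_{i+1},\dots,x_n)$, and let $\alpha_n$ denote the exponent in \eqref{est:mainthm}, so $1-\alpha_n=2^{-n+1}$. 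In the base case $n=1$ the substitution $x_0=s-x_1$ turns $\Lambda_{1,r,R}$ into $\int_{r\le|s|\le R}(F_0\ast F_1)(s)\,\tfrac{ds}{s}$, and since the odd kernel $\mathbbm{1}_{[r,R]}(|s|)/s$ has a Fourier transform bounded uniformly in $r$ and $R$, the multiplication formula together with $\|\widehat{F_0}\widehat{F_1}\|_{1}\le\|F_0\|_2\|F_1\|_2$ gives $|\Lambda_{1,r,R}|\lesssim\|F_0\|_2\|F_1\|_2$, i.e.\ \eqref{est:mainthm} with $\alpha_1=0$.

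For the inductive step keep $x_0,\dots,x_{n-1}$ and set $x_n=s-(x_0+\dots+x_{n-1})$ with $s=x_0+\dots+x_n$; since $F_n(\widehat x_n)$ does not see $s$, one has $\Lambda_{n,r,R}=\langle F_n,\overline{\Psi}\,\rangle$ with $\Psi(x_0,\dots,x_{n-1})=\int_{r\le|s|\le R}\tfrac1s\prod_{i=0}^{n-1}F_i(\widehat x_i)\,ds$, whence $|\Lambda_{n,r,R}|\le\|F_n\|_2\,\|\Psi\|_{\textup{L}^2}$ by Cauchy--Schwarz. Here the oscillation of $1/s$ has been discarded, which will cost one factor $\sim L$. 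Expanding $\|\Psi\|_{\textup{L}^2}^2$ introduces a second copy of the $s$-integral in a variable $s'$, and crucially the two truncated kernels combine into the product $K_{r,R}(s)\overline{K_{r,R}(s')}$ with $K_{r,R}(t)=\mathbbm{1}_{[r,R]}(|t|)/t$; a linear change of variables making $s$ the new kernel variable and eliminating one $x_j$, followed by passing to $\tau:=s'-s$, decouples the kernel from the remaining variables and makes each $F_i$ ($0\le i\le n-1$) appear only through the short multiplicative difference $G_i^{(\tau)}:=F_i(\cdot)\overline{F_i(\cdot+\tau e)}$ in a fixed coordinate direction. One then recognizes the remaining expression, for each fixed $\tau$, as a (two-sided) truncated simplex Hilbert transform of degree $n-1$ built from $G_0^{(\tau)},\dots,G_{n-1}^{(\tau)}$.

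The exponents cooperate: those for degree $n-1$, namely $2^{n-1},2^{n-1},2^{n-2},\dots,2$, are exactly half of the exponents $2^{n},2^{n},2^{n-1},\dots,2^{2}$ attached to $F_0,\dots,F_{n-1}$ in \eqref{est:mainthm}, and $\|G_i^{(\tau)}\|_q\le\|F_i\|_{2q}^{2}$ by Cauchy--Schwarz. Applying the induction hypothesis at degree $n-1$ therefore bounds the inner form, for each $\tau$, by $\lesssim L^{\alpha_{n-1}}\big(\|F_0\|_{2^n}\prod_{i=1}^{n-1}\|F_i\|_{2^{n-i+1}}\big)^2$; integrating out $\tau$ (confined to $|\tau|\le 2R$ on the support of the kernel, with rapid decay there by the Schwartz hypothesis) and restoring the discarded factor $\sim L$, one gets $\|\Psi\|_{\textup{L}^2}^2\lesssim L^{1+\alpha_{n-1}}\big(\|F_0\|_{2^n}\prod_{i=1}^{n-1}\|F_i\|_{2^{n-i+1}}\big)^2$. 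Combined with $|\Lambda_{n,r,R}|\le\|F_n\|_2\|\Psi\|_{\textup{L}^2}$ this yields $|\Lambda_{n,r,R}|\lesssim L^{(1+\alpha_{n-1})/2}\|F_0\|_{2^n}\prod_{i=1}^{n}\|F_i\|_{2^{n-i+1}}$, i.e.\ $1-\alpha_n=\tfrac12(1-\alpha_{n-1})$; with $\alpha_1=0$ this forces $1-\alpha_n=2^{-n+1}$, which is \eqref{est:mainthm}.

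The main obstacle is making this reduction rigorous, and in particular the bookkeeping of the truncations. Under the change of variables the single constraint $r\le|x_0+\dots+x_n|\le R$ splits into two constraints involving $s$ and $s+\tau$ that cannot be merged; the contribution of the region where they disagree — essentially $|s|$ within $|\tau|$ of the endpoints $r,R$, together with large $|\tau|$ — must be separated off and estimated using the Schwartz decay of the $F_i$, which is what makes that hypothesis essential. Relatedly, the expression produced by the Cauchy--Schwarz step is slightly more general than \eqref{est:mainthm} (it carries the shift $\tau$ and a two-sided truncation), so the induction should be set up for a suitable class of such forms, with a check that the class is stable under the reduction. The Cauchy--Schwarz itself, the base case, and the exponent arithmetic are by comparison routine.
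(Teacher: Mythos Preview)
Your opening sentence---that the Corollary follows from Theorem~\ref{thm:mainthm} by interpolating \eqref{est:mainthm} against the trivial bound \eqref{est:trv}---is exactly the paper's argument for Corollary~\ref{cor:maincor}, and is correct.

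The difficulty is entirely in your proposed proof of Theorem~\ref{thm:mainthm}, where the reduction to degree $n-1$ does not work. After the Cauchy--Schwarz in $F_n$ and the change of variables you describe, the object you obtain (for fixed $\tau$) is
\[
\int_{\mathbb{R}^{n+1}} \frac{\mathbbm{1}_{r\le |s|\le R}\,\mathbbm{1}_{r\le |s+\tau|\le R}}{s\,(s+\tau)}\;
\prod_{i=0}^{n-1} G_i^{(\tau)}(x_0,\dots,\widehat{x_i},\dots,x_n)\,dx_0\cdots dx_n,
\qquad s=x_0+\cdots+x_n,
\]
with each $G_i^{(\tau)}$ a function of $n$ variables. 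This is \emph{not} a degree-$(n-1)$ simplex form: there are still $n+1$ integration variables and no function omits $x_n$, so you cannot integrate $x_n$ out, nor does any change of variables produce $n$ functions of $n-1$ variables paired with a $1/(y_0+\cdots+y_{n-1})$ kernel. What you have is precisely the ``prism'' $\mathcal{F}^{n-1}$ in the paper's notation (the $n$-simplex with the $x_n$-vertex doubled), and there is no known way to bound it directly by a degree-$(n-1)$ simplex estimate. The paper handles this by inducting not on $n$ but on an auxiliary parameter $1\le k\le n$, with $n$ fixed: a Cauchy--Schwarz step takes $\mathcal{F}^{k}$ to $\mathcal{F}^{k-1}$, and a telescoping (integration-by-parts in scale) identity is needed at each step to move the cancellation so that the next Cauchy--Schwarz can be applied. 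The base case $k=1$ is the $n$-cube form, which is bounded uniformly in $L$.

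A second, independent problem is your handling of the $\tau$-integral via ``rapid decay by the Schwartz hypothesis.'' The constant in \eqref{est:mainthm} depends only on $n$, so the bound must be uniform over all Schwartz functions with the prescribed $\textup{L}^p$ norms; you are not allowed to use any decay of the $F_i$ beyond those norms. Even granting the (incorrect) degree-$(n-1)$ identification, your $\tau$-integral would contribute an uncontrolled factor (of order $R$ or $\log(R/r)$, depending on how you organize the kernel), destroying the claimed exponent recursion $1-\alpha_n=\tfrac12(1-\alpha_{n-1})$.
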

In particular, this strengthens the results from \cite{tt:mht} and \cite{pz:splx}. The special case $n=2$ was commented on in \cite{dkst:nvea}, where it followed from boundedness of a certain square function.
A modification of our arguments could yield bounds for a simplex transform associated with more general Calder\'on-Zygmund kernels on $\mathbb{R}$ replacing $K(t)=1/t$, but we do not aim for that kind of generality here.
The reader can also consult \cite{ktz:tht} and \cite{pz:splx} for the ways of encoding various lower-dimensional or less singular operators into $\Lambda_{n}$, so that Corollary~\ref{cor:maincor} gives nontrivial estimates for the truncations of these operators too, even though some of them are already known to be (uniformly) bounded.

The proof of Theorem~\ref{thm:mainthm} is a special case of a more general estimate in Lemma~\ref{lemma:mainlemma} on auxiliary forms involving an additional parameter $1\leq k\leq n$, which is in turn proved by induction on that parameter.
The induction uses higher-dimensional analogues of the arguments in \cite{pd:L4}, \cite{dkst:nvea}, and \cite{vk:tp}, i.e.\@ intertwined applications of the Cauchy-Schwarz inequality \eqref{afterCS} and an integration by parts identity \eqref{ftc}.
The base case is closely related to the quadrilinear forms studied in \cite{pd:L4} and \cite{vk:tp}.

In Section \ref{dyadicsection} we discuss a dyadic version of Theorem \ref{thm:mainthm}.


\section{Proof of Theorem~\ref{thm:mainthm}}
We fix an integer $n\geq 2$ and numbers $0<r<R$.
One can suppose that $\log(R/r)>1$, since otherwise \eqref{est:mainthm} is even weaker than \eqref{est:trv}.
We also fix Schwartz functions $F_0,\dots, F_n$ as in Theorem~\ref{thm:mainthm}.
It is enough to work with real-valued functions, since complex-valued functions may be split into their real and imaginary parts.
By homogeneity we may assume that the functions are normalized as
\begin{align}\label{norm}
\|F_0\|_{{2^n}} = \|F_1\|_{{2^n}} = \|F_2\|_{{2^{n-1}}} = \dots = \|F_n\|_{{2^{1}}} =1.
\end{align}

Next, we pass from rough to smooth truncations of the simplex Hilbert transform.
Let us write
$$ \varphi(x):= \frac{\mathbbm{1}_{[-R,R]\setminus [-r,r]} (x) - (g(x/R)-g(x/r))}{x}
= \frac{\mathbbm{1}_{[-1,1]}(x/R) - g(x/R)}{x} - \frac{\mathbbm{1}_{[-1,1]}(x/r) - g(x/r)}{x}, $$
where $g$ is the Gaussian function $g(x):=e^{-\pi {x^2}}$.
Note that $\varphi$ is integrable uniformly in the truncation parameters $0<r<R$ and that the bound
\begin{align*}
\Big| \int_{\mathbb{R}^{n+1}}\prod_{i=0}^{n} F_i(x_0,\dots,x_{i-1},x_{i+1},\dots,x_n) \varphi(x_0+\dots + x_n) dx_0 \dots dx_n \Big| \leq \|\varphi\|_1 \|F_0\|_{{2^n}} \prod_{i=1}^n\|F_i\|_{{2^{n-i+1}}}
\end{align*}
follows from the change of variables $x_0= x-x_1-\dots - x_{n}$ and H\"older's inequality in $x_1,\dots, x_n$.
Therefore, in order to prove Theorem~\ref{thm:mainthm} it suffices to prove the estimate for the kernel
$$ \frac{g(x/R)-g(x/r)}{x} = - \int_{r}^{R} t^{-2}g'(t^{-1}x) dt.$$
That is, it suffices to obtain, in lieu of \eqref{est:mainthm},
\begin{align}
\Big| \int_r^R \int_{\mathbb{R}^{n+1}} & \prod_{i=0}^{n} F_i(x_0,\dots,x_{i-1},x_{i+1},\dots,x_n) \nonumber \\
& h_t(x_0+\dots + x_n)dx_0 \dots dx_n \frac{dt}{t} \Big|
\leq C \Big(\log \frac{R}{r} \Big)^{1-2^{-n+1}}, \label{toshow:smooth}
\end{align}
where $h$ is the derivative of $g$, and we use subscripts to denote $\textup{L}^1$-normalized dilates of functions:
\begin{align*}
h_t(x):= t^{-1}h (t^{-1}x) .
\end{align*}

For the inductive statement we need to define further expressions. For $0\leq k\leq n$ we define $\mathcal{F}^k$ as a function of variables $x_0,\dots, x_n,x_0^0,x_0^1,\dots, x_n^0,x_n^1 \in \mathbb{R}$ by
\begin{align}\label{fnk}
\mathcal{F}^k:= \prod_{i=0}^{k} \prod_{(r_{k+1},\dots,r_n)\in\{0,1\}^{n-k}}F_i(x_0,\dots,x_{i-1},x_{i+1},\dots,x_k,x_{k+1}^{r_{k+1}},\dots,x_{n}^{r_{n}}).
\end{align}
Note that $\mathcal{F}^k$ does not depend on $x_{k+1},\dots, x_n$ and $x_0^0,x_0^1,\dots, x_k^0,x_k^1$. Each factor $F_i$ in the product has the property that for each $k+1\leq j \leq n$ it is
independent of precisely one of the variables $x_j^0$ or $x_j^1$.
If $n=3$, the structure of $\mathcal{F}^k$ for $k=3$, $2$, and $1$ is illustrated in Figures~\ref{fig:splx1}--\ref{fig:splx3} in the next section.
The set $\{0,\dots, k\} \times \{0,1\}^{n-k}$ is viewed as set of vertices of a polytope in $\mathbb{R}^n$.
To each hyper-face of the polytope we associate a variable and to each vertex a function $F_j$ of the adjacent $n$ variables.
In the cases $k=0$ and $k=1$, the polytope is an $n$-dimensional cube, while for $k=n$ the polytope is an $n$-dimensional simplex.

For $2\leq k\leq n$ and $\alpha, \alpha_k,\dots, \alpha_n \in (0,\infty)$ we define
\begin{align} \nonumber
\Lambda^k_{\alpha,\alpha_{k},\dots,\alpha_n} :=\, & \int_r^R \int_{\mathbb{R}^{n-k+1}}
\int_{\mathbb{R}^{2n-2k}}
\int_{\mathbb{R}^{k}} \Big| \int_{\mathbb{\mathbb{R}}}\mathcal{F}^k\, h_{t\alpha_k}(x_k-p_k) dx_{k} \Big|
\\ \nonumber
& g_{t \alpha }(x_0+\dots +x_{k-1}+p_{k}+\dots +p_{n}) dx_0\dots dx_{k-1}
\\ \label{def:lambdaK}
& \Big( \prod_{i=k+1}^{n} g_{t \alpha_i }(x_i^{0} - p_i)g_{t \alpha_i }(x_i^{1} - p_i)
 dx_i^{0} dx_i^{1} \Big) dp_{k}\dots dp_n \frac{dt}{t} .
\end{align}
For $1\leq k\leq n$ and $\alpha,\alpha_k,\dots, \alpha_n \in (0,\infty)$ we define
\begin{align} \nonumber
\widetilde{\Lambda}^k_{\alpha,\alpha_{k},\dots,\alpha_n} :=\, & \int_r^R \int_{\mathbb{R}^{n-k+1}}
\int_{\mathbb{R}^{2n-2k}}
\Big| \int_{\mathbb{R}^{k}}\int_{\mathbb{R}} \mathcal{F}^k\, h_{t\alpha_k}(x_k-p_k) dx_{k}
\\ \nonumber
& h_{t \alpha }(x_0+\dots +x_{k-1}+p_{k}+\dots +p_{n}) dx_0\dots dx_{k-1}\Big|
\\ \label{def:lambdaK1}
& \Big( \prod_{i=k+1}^{n} g_{t \alpha_i }(x_i^{0} - p_i)g_{t \alpha_i }(x_i^{1} - p_i)
 dx_i^{0} dx_i^{1} \Big) dp_{k}\dots dp_n \frac{dt}{t} .
\end{align}
The differences between \eqref{def:lambdaK} and \eqref{def:lambdaK1} are the occurrence of $g_{t \alpha }$
versus $h_{t \alpha}$ and the position of the absolute value signs. Also, we have no need to define \eqref{def:lambdaK} for $k=1$. Observe the trivial identity
\begin{align*}
h = 2^{1/2} \, h_{2^{-1/2} } \ast g_{2^{-1/2} }.
\end{align*}
Therefore the left hand-side of \eqref{toshow:smooth} is bounded by
$$ 2^{1/2} \Lambda_{ 2^{-1/2} ,\,2^{-1/2}}^{n}. $$
The estimate \eqref{toshow:smooth} is then
a consequence of the following lemma.

All constants in what follows will depend on $n$ and $k$ and we write
$A\lesssim B$ if there exists a finite constant $C$ depending on $n$ and $k$
such that $A\leq C B$.

\begin{lemma}\label{lemma:mainlemma}
For any $2\leq k \leq n$ and any $\alpha, \alpha_k,\dots,\alpha_n \in [2^{-(n-k+1)/2},\infty)$ we have the estimates
\begin{align}\label{inductest}
{\Lambda}^{k}_{\alpha,\alpha_{k},\dots, \alpha_n},\, \widetilde{\Lambda}^{k}_{\alpha,\alpha_{k},\dots, \alpha_n} 
\,\lesssim\, \big( \alpha \alpha_k \dots \alpha_n \big)^{2} \Big(\log \frac{R}{r} \Big)^{1-2^{-k+1}} .
\end{align}
For $k=1$ and any $\alpha,\alpha_1,\ldots,\alpha_n\in(0,\infty)$ we have the estimate
\begin{align*}
\widetilde{\Lambda}^{1}_{\alpha,\alpha_{1},\dots, \alpha_n} \lesssim 1.
\end{align*}
\end{lemma}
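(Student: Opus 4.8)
The plan is to prove Lemma~\ref{lemma:mainlemma} by induction on $k$, with the base case $k=1$ treated directly and the inductive step passing from level $k-1$ to level $k$ via the scheme announced in the introduction: an application of the Cauchy--Schwarz inequality \eqref{afterCS} followed by the integration-by-parts identity \eqref{ftc}.

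\textbf{Base case $k=1$.}
Here $\widetilde\Lambda^1_{\alpha,\alpha_1,\dots,\alpha_n}$ is, after unravelling \eqref{def:lambdaK1} and \eqref{fnk}, essentially a quadrilinear form of the type appearing in \cite{pd:L4} and \cite{vk:tp}: the inner integral over $x_0,x_1$ pairs the functions $F_0,F_1$ against a tensor product of a Gaussian in $x_0+p_1+\dots+p_n$ and $h_{t\alpha_1}(x_1-p_1)$, while the outer variables $p_2,\dots,p_n$ and the doubled variables $x_i^0,x_i^1$ carry the remaining factors $F_2,\dots,F_n$ paired against Gaussians. First I would freeze $t$ and the auxiliary variables and recognise the $x_0,x_1$-integral, after a change of variables absorbing $p_1$, as a model paraproduct/maximal-type expression bounded by a single-scale Gauss--Cauchy--Schwarz argument; then I would integrate in $t$ over $[r,R]$ using that the relevant kernel has an $L^1$-in-$dt/t$ bound \emph{uniform} in $R/r$ because the $h$-factor provides cancellation at a fixed scale rather than a logarithmic pile-up. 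Finally I would integrate out $p_2,\dots,p_n$ and the doubled variables against the $L^1$-normalised Gaussians and invoke H\"older with the normalisation \eqref{norm}. The outcome is the clean bound $\widetilde\Lambda^1\lesssim 1$, with no logarithmic factor, which is the engine of the whole induction. This is where I would lean most heavily on \cite{pd:L4} and \cite{vk:tp}, quoting their quadrilinear estimate rather than reproving it.

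\textbf{Inductive step.}
Assume the bound at level $k-1$; I want it at level $k$ (for $\Lambda^k$ when $k\geq 2$, and for $\widetilde\Lambda^k$). Starting from $\widetilde\Lambda^k$, the absolute value sits \emph{outside} the $x_0,\dots,x_{k-1}$ and $x_k$ integrations. I would first split off the factor $F_k$ (the one not depending on $x_k$) by Cauchy--Schwarz in the variables that $F_k$ does not see, doubling the variable $x_k$ into $x_k^0,x_k^1$ and producing, up to the passage from $h$ to a Gaussian via the smoothing identity $h=2^{1/2}h_{2^{-1/2}}\ast g_{2^{-1/2}}$, a new expression of type $\Lambda^{k}$ but now with the absolute value moved inside around the $x_k$-integral — this is precisely the relationship between \eqref{def:lambdaK} and \eqref{def:lambdaK1} run in reverse. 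Next, for $\Lambda^k$, I would apply the fundamental theorem of calculus identity \eqref{ftc} to the Gaussian $g_{t\alpha}$ in the outer variable: writing $g_{t\alpha}(\cdot)$ as an integral in $t$ (or in a scale parameter) of $\partial_t$ of a Gaussian trades the undifferentiated Gaussian for an $h$-type factor at a comparable scale, at the cost of an extra $\int dt/t$ and an extra factor of $\log(R/r)$ raised to the appropriate power; this is the step that increases the dimension of the ``simplex part'' by one and that, combined with Cauchy--Schwarz, halves the exponent deficit $2^{-k+1}$ relative to $2^{-k+2}$. One must then re-index so that the resulting form is literally a $\widetilde\Lambda^{k-1}$ (or $\Lambda^{k-1}$) with new parameters $\alpha',\alpha'_{k-1},\dots,\alpha'_n$ still lying in the admissible range $[2^{-(n-k+2)/2},\infty)$ — this is why the hypothesis on the $\alpha_j$'s is stated with that particular lower bound, which must be checked to be preserved (indeed improved) after each convolution with $g_{2^{-1/2}}$. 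Applying the inductive hypothesis and tracking the homogeneity constants $(\alpha\alpha_k\cdots\alpha_n)^2$ through the Cauchy--Schwarz step (which squares norms and hence squares these weights) and through the change of variables then yields \eqref{inductest} at level $k$.

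\textbf{Main obstacle.}
The genuinely delicate point is the bookkeeping in the Cauchy--Schwarz step: one must choose exactly which variables to duplicate and which Gaussian to split so that the squared form reassembles into $\mathcal{F}^{k-1}$ with the product structure \eqref{fnk} intact — i.e.\ so that each $F_i$ with $i<k$ ends up depending on precisely one of each duplicated pair $x_j^0,x_j^1$ — and so that the localising Gaussians land in the right variables with scales that are still mutually comparable powers of $t$. Getting the scale parameters to stay in the allowed cone $[2^{-(n-k+1)/2},\infty)$, and tracking how the $\alpha$-weights transform, requires care but no new idea; once the combinatorial/geometric picture (the polytope with hyper-faces $\leftrightarrow$ variables and vertices $\leftrightarrow$ functions described after \eqref{fnk}) is set up correctly, the analytic estimates are local single-scale computations plus the uniform-in-$R/r$ integration in $t$ already handled in the base case. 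I expect the hardest writing to be making the induction hypothesis strong enough that both $\Lambda^k$ and $\widetilde\Lambda^k$ feed each other cleanly, and verifying the exponent arithmetic $1-2^{-k+1} = \tfrac12\bigl(1-2^{-k+2}\bigr) + \tfrac12$ that explains the power-type gain.
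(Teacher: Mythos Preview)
Your outline has the right ingredients (induction on $k$, Cauchy--Schwarz, the integration-by-parts identity) but the mechanics are garbled in ways that would not close. In the inductive step the extra factor of $\log(R/r)$ does \emph{not} come from the identity \eqref{ftc}; it comes from applying Cauchy--Schwarz in the variable $t$ to $\Lambda^k$, which yields $(\Lambda^k)^2 \le (\log\tfrac{R}{r})\int_r^R \mathcal{M}_t\,\mathcal{N}_t\,\tfrac{dt}{t}$ with $\mathcal{N}_t\le 1$ by H\"older. The identity \eqref{ftc} is then applied to $\int_r^R\mathcal{M}_t\,\tfrac{dt}{t}=\Theta^{(k)}$ and does the \emph{opposite} of what you say: it telescopes the $t$-integral down to two single-scale boundary terms (which are $O(1)$) plus the remainder $\Theta$, which is dominated by $\widetilde\Lambda^{k-1}$. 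The step you are missing entirely is that \eqref{ftc} does not produce $\Theta^{(k)}$ alone but the full sum $\Theta+\sum_{j=k}^n\Theta^{(j)}$; one isolates $\Theta^{(k)}$ only because every $\Theta^{(j)}$ is nonnegative (each is an integral of a square). Without this positivity observation the argument does not go through. Also, the passage from $\widetilde\Lambda^k$ to $\Lambda^k$ is not ``Cauchy--Schwarz run in reverse'' but the pointwise domination $|h(x)|\lesssim\int_1^\infty g_\beta(x)\beta^{-4}\,d\beta$, which replaces the outer $h_{t\alpha}$ by a superposition of Gaussians; the convolution identity $h=2^{1/2}h_{2^{-1/2}}\ast g_{2^{-1/2}}$ is used elsewhere (namely in the definition of $\Theta$, to split the new $h$ into two factors at scale $\alpha 2^{-1/2}$).

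Your base case is also off. The cancellation of $h$ does \emph{not} give an $L^1(dt/t)$ bound on the kernel uniform in $R/r$ --- any such naive bound still grows like $\log(R/r)$. The actual mechanism is again the telescoping identity: one applies Cauchy--Schwarz directly to $\widetilde\Lambda^1$ (without first dominating either $h$), separating $F_0$ from $F_1$ and obtaining $(\widetilde\Lambda^1)^2\le\widetilde\Theta^{(1)}(F_0)\,\widetilde\Theta^{(1)}(F_1)$; then the same identity \eqref{ftc} gives $\sum_{j=1}^n\widetilde\Theta^{(j)}(F)\lesssim 1$ because the analogue of the bad term $\Theta$ now \emph{vanishes} (the integral of $h$ over the free variable $x_0$ is zero), and nonnegativity of each $\widetilde\Theta^{(j)}$ finishes. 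This is indeed in the spirit of \cite{pd:L4} and \cite{vk:tp}, but it is the higher-dimensional version and cannot simply be quoted; the paper reproves it with the same telescoping machinery used in the inductive step.
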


\begin{proof}[Proof of Lemma~\ref{lemma:mainlemma}] We induct on $1\leq k\leq n$ and let us begin by establishing the inductive step.
Take $2\leq k \leq n$ and $\alpha,\alpha_k,\dots, \alpha_n \in [2^{-(n-k+1)/2},\infty)$.
We first reduce the desired bound on
$\widetilde{\Lambda}^k_{\alpha,\alpha_k,\dots, \alpha_n}$ to that on
${\Lambda}^k_{\alpha,\alpha_k,\dots, \alpha_n}$.
We can dominate pointwise
\begin{align}\label{dom}
|h(x)| \lesssim \int_1^\infty g_\beta(x) \beta^{-4}{d\beta}
\end{align}
for each $x\in \mathbb{R}$.
Indeed, the right hand-side of \eqref{dom} is comparable to $x^{-4}$ for large $|x|$. By the triangle inequality and \eqref{dom} we can then bound
\begin{align*}
\widetilde{\Lambda}^k_{\alpha,\alpha_k,\dots, \alpha_n} \lesssim \int_1^\infty & \Lambda^k_{\alpha\beta,\alpha_k,\dots, \alpha_n } \,\beta^{-4} d\beta .
\end{align*}
Assuming the estimate \eqref{inductest} for ${\Lambda}^k_{\alpha,\alpha_k,\dots, \alpha_n}$, the right hand side of the last display is integrable in $\beta$.
Since $\alpha\in [2^{-(n-k+1)/2},\infty)$ is arbitrary, it suffices to prove upper bounds for ${\Lambda}^k_{\alpha,\alpha_k,\dots, \alpha_n}$.

Now we apply the Cauchy-Schwarz inequality in the variable $t$, which yields
\begin{align*}
\big( \Lambda_{\alpha,\alpha_{k},\dots,\alpha_n}^{k} \big)^2 \leq \Big( \log \frac{R}{r} \Big) & \int_r^R \bigg( \int_{\mathbb{R}^{n-k+1}}
\int_{\mathbb{R}^{2n-2k}} \int_{\mathbb{R}^{k}} \Big| \int_{\mathbb{R} } \mathcal{F}^k
\, h_{t\alpha_k}(x_k-p_k) dx_k \Big| \\
& g_{t \alpha}(x_0+\dots +x_{k-1}+p_{k}+\dots +p_{n}) dx_0\dots dx_{k-1} \\
& \Big( \prod_{i=k+1}^{n} g_{t \alpha_i }(x_i^{0} - p_i)g_{t \alpha_i }(x_i^{1} - p_i)
 dx_i^{0} dx_i^{1} \Big) dp_{k}\dots dp_n \bigg)^2 \frac{dt}{t} .
\end{align*}
We expand the definition of $\mathcal{F}^k$ and for each fixed $t$ we apply the Cauchy-Schwarz
inequality in all remaining integration variables but $x_k$. This way we obtain
\begin{align}\label{afterCS}
\big( \Lambda_{\alpha,\alpha_{k},\dots,\alpha_n}^{k} \big)^2 &\leq \Big( \log \frac{R}{r} \Big) \int_r^R \mathcal{M}_t\, \mathcal{N}_t \, \frac{dt}{t} ,
\end{align}
where
\begin{align*}
\mathcal{M}_t := \ & \int_{\mathbb{R}^{n-k+1}}
\int_{\mathbb{R}^{2n-2k}} \int_{\mathbb{R}^k} \bigg( \int_{\mathbb{R}} \prod_{i=0}^{k-1} \prod_{(r_{k+1},\dots,r_n)\in\{0,1\}^{n-k}}F_i(x_0,\dots,x_{i-1},x_{i+1},\dots, x_{k},x_{k+1}^{r_{k+1}},\dots ,x_{n}^{r_n})\\
& h_{t\alpha_k}(x_k-p_k) dx_k \bigg)^2 g_{t\alpha } (x_0+\dots +x_{k-1}+p_k+\dots + p_{n}) dx_0\dots dx_{k-1} \\
& \Big( \prod_{i=k+1}^{n} g_{t\alpha_i}(x_i^{0} - p_i)g_{t\alpha_i}(x_i^{1} - p_i)
 dx_i^{0} dx_i^{1} \Big) dp_{k}\dots dp_n
\end{align*}
and
\begin{align*}
\mathcal{N}_t
 := \ & \int_{\mathbb{R}^{n-k+1}}
\int_{\mathbb{R}^{2n-2k}}\int_{\mathbb{R}^k} \prod_{(r_{k+1},\dots,r_n)\in\{0,1\}^{n-k}} F_k(x_0,\dots,x_{k-1},x_{k+1}^{r_{k+1}},\dots,x_{n}^{r_{n}})^2 \\
& g_{t\alpha}(x_0+\dots +x_{k-1}+p_k+\dots + p_{n}) dx_0\dots dx_{k-1}\\
& \Big( \prod_{i=k+1}^{n} g_{t\alpha_i}(x_i^{0} - p_i)g_{t\alpha_i}(x_i^{1} - p_i)
 dx_i^{0} dx_i^{1} \Big) dp_{k}\dots dp_n .
\end{align*}

To estimate $\mathcal{N}_t$ pointwise for each fixed $t$, we first integrate in $p_k$ getting rid of $g_{t \alpha}$, then introduce the variables $y_i$ and $q_i$ via $x_i^0=x_i^1-y_i$ and $p_i = x_i^{1} - q_i$, respectively. 
Next, we apply H\"older's inequality in variables $x_0$ through $x_{k-1}$ and $x_{k+1}^{1}$ through $x_n^1$. 
Finally, we integrate the remaining Gaussian factors in $y_i$ and $q_i$ for $k+1 \leq i \leq n$. This yields 
\begin{align} \label{Nt}
\mathcal{N}_t\leq \|F_k^2\|_{{2^{n-k}}}^{2^{n-k}} =\|F_k\|_{{2^{n-k+1}}}^{2^{n-k+1}} =1,
\end{align}
so we have obtained an estimate which is uniform in $t>0$.

It remains to control
\begin{align}\label{integralmt}
\int_r^R
\mathcal{M}_t \frac{dt}{t} .
\end{align}
Expanding the square in the definition of $\mathcal{M}_t$, the expression \eqref{integralmt} becomes
the special case $k=j$ of the following more general expressions defined for $j\geq k$:
\begin{align}\label{defthetaj}
{\Theta}^{(j)}:= & \int_r^R \int_{\mathbb{R}^{n-k+1}}
\int_{\mathbb{R}^{2n-2k+2}} \int_{\mathbb{R}^k} \mathcal{F}^{k-1}
 \\
\nonumber
& g_{t\alpha}(x_0+\dots +x_{k-1}+p_k+\dots + p_{n}) dx_0\dots dx_{k-1}
\\ \nonumber
 & h_{t\alpha_j}(x_j^{0}-p_j) {h}_{t\alpha_j}(x_j^{1}-p_j)dx_j^{0} dx_j^{1} \Big( \prod_{\substack{i=k\\i\neq j}}^{n} g_{t\alpha_i}(x_i^{0} - p_i)g_{t\alpha_i}(x_i^{1} - p_i)
 dx_i^{0} dx_i^{1} \Big) dp_{k}\dots dp_n \frac{dt}{t}.
\end{align}
Also define
\begin{align*}
{\Theta} := \,
-\Big(1 + \alpha^{-2} \sum_{j=k}^{n} \alpha_j^2 \Big)
& \int_r^R \int_{\mathbb{R}^{n-k+2}} \int_{\mathbb{R}^{2n-2k+2}} \int_{\mathbb{R}^{k-1}} \int_{\mathbb{R}} \mathcal{F}^{k-1} \, h_{t\alpha 2^{-1/2} }(x_{k-1}-p_{k-1}) dx_{k-1}\\
& h_{t\alpha 2^{-1/2} }(x_0+\dots +x_{k-2}+p_{k-1}+\dots + p_{n})
 dx_0\dots dx_{k-2} \\
& \Big( \prod_{i=k}^{n} g_{t\alpha_i}(x_i^{0} - p_i)g_{t\alpha_i}(x_i^{1} - p_i)
 dx_i^{0} dx_i^{1} \Big) dp_{k-1}\dots dp_n \frac{dt}{t}.
\end{align*}
We claim that
\begin{align}\label{telescoping}
{\Theta} + \sum_{j=k}^n {\Theta}^{(j)} \lesssim 1.
\end{align}

Before proving the claim, we show how it can be used to control ${\Theta}^{(k)}$.
Note that ${\Theta}^{(k)}$ is non-negative because the real-valued terms in the expression assemble into an integral of a square
that came from previous application of the Cauchy-Schwarz inequality. The terms ${\Theta}^{(j)}$ are also non-negative for each $j\geq k$; the argument is the same after renaming the variables.
Therefore, comparing the definitions of $\Theta$ and
$\widetilde{\Lambda}^{k}_{\alpha,\alpha_k,\dots, \alpha_n }$,
\begin{align*}
{\Theta}^{(k)} \leq \sum_{j=k}^n {\Theta}^{(j)} \lesssim 1 + |\Theta|
\leq 1 + \Big(1+ \alpha^{-2}\sum_{j=k}^{n} \alpha_j^2 \Big)
\,\widetilde{\Lambda}^{k-1}_{\alpha 2^{-1/2}, \alpha 2^{-1/2}, \alpha_k,\dots,\alpha_n}.
\end{align*}
By the induction hypothesis (i.e.\@ the statement for $k-1$), we may estimate this display further by
$$ \lesssim \alpha^{-2} \Big( \alpha^2 + \sum_{j=k}^{n} \alpha_j^2 \Big) (\alpha^2 \alpha_k \dots \alpha_n)^{2} \Big( \log \frac{R}{r} \Big)^{1-2^{-k+2}}
\lesssim (\alpha \alpha_k \dots \alpha_n)^{4} \Big( \log \frac{R}{r} \Big)^{1-2^{-k+2}}, $$
where we have estimated the sum of the squared alphas by their product.
We combine this estimate with \eqref{afterCS} and \eqref{Nt}. Multiplying with $\log(R/r)$ and taking the square root shows \eqref{inductest} for the given $k$, completing the induction step up to the verification of the claim \eqref{telescoping}.

To see this claim, we employ the Fourier transform which we normalize as
$$\widehat{f}(\xi):=\int_{\mathbb{R}}f(x)e^{-2\pi \mathbbm{i} x\xi} dx.$$
For fixed $x_0,\dots,x_{k-1},x_k^{0},x_k^{1},\dots,x_n^{0},x_n^{1}$ the integral in $p_k,\dots, p_n$ in ${\Theta}^{(j)}$
is the integral of the function
\begin{align*}
H(q,q_k^{0},q_k^{1},\dots, q_n^{0},q_n^{1} ):=
& g_{t\alpha}(q + x_0+\dots +x_{k-1}) \\
 & h_{t\alpha_j}(q_j^{0}-x_j^{0}) {h}_{t\alpha_j}(q_j^{1}-x_j^{1})\Big( \prod_{\substack{i=k\\ i \neq j}}^{n} g_{t\alpha_i}(q_i^{0} - x_i^{0})g_{t\alpha_i}(q_i^{1} - x_i^{1})
 \Big)
\end{align*}
over the $(n-k+1)$-dimensional subspace
$$\{(p_k+\dots +p_n,\, p_k ,\, p_k,\, p_{k+1},\, p_{k+1},\dots, p_n,\,p_n) : p_k,\dots, p_n\in \mathbb{R}\} $$
of $\mathbb{R}^{2n-2k+3}$. The orthogonal complement of this subspace is
$$\{(\eta,\, \xi_k,\, -\xi_k-\eta , \, \xi_{k+1},\, -\xi_{k+1}-\eta ,\dots, \xi_n, \, -\xi_n-\eta) : \eta, \xi_k,\dots, \xi_n\in \mathbb{R}\}.$$ 
The previously mentioned integral is equal to the integral of the Fourier transform of $H$ over this orthogonal complement, which in turn becomes
\begin{align}\nonumber
& \int_{\mathbb{R}^{n-k+2}} \widehat{g_{t\alpha}}(\eta)\widehat{h_{t\alpha_j}}(\xi_{j}) {\widehat{h_{t\alpha_j}}}(-\xi_{j}-\eta)\prod_{\substack{i=k\\i\neq j}}^n \widehat{g_{t\alpha_i}}(\xi_{i})\widehat{g_{t\alpha_i}}(\xi_{i}+\eta)\\ \label{thetafourier}
& \;e^{2\pi \mathbbm{i} \left( (x_0+\dots +x_{k-1})\eta - \sum_{i=k}^n (x_i^{0}\xi_i + x_i^{1}(-\xi_i-\eta)) \right)}\, d\eta d\xi_k \dots d\xi_n .
\end{align}
Quite similarly, the integral in $p_{k-1},\dots,p_n$ in $\Theta$ can be expressed as
\begin{align}\nonumber
& \int_{\mathbb{R}^{n-k+2}} {\widehat{h_{t\alpha 2^{-1/2} }}}(\eta) {\widehat{h_{t\alpha 2^{-1/2} }}}(-\eta)\prod_{i=k}^n \widehat{g_{t\alpha_i}}(\xi_{i})\widehat{g_{t\alpha_i}}(\xi_{i}+\eta)\\ \label{thetafourier2}
& \;e^{2\pi \mathbbm{i} \left( (x_0+\dots +x_{k-1})\eta - \sum_{i=k}^n (x_i^{0}\xi_i + x_i^{1}(-\xi_i-\eta)) \right)}\, d\eta d\xi_k \dots d\xi_n .
\end{align}
Now we state the crucial ``telescoping'' or ``integration by parts'' identity
\begin{align}
\nonumber
& \Big( 1+ \alpha^{-2} \sum_{j=k}^{n} \alpha_j^2 \Big)
\int_r^R {\widehat{h_{t\alpha 2^{-1/2} }}}(\eta) {\widehat{h_{t\alpha 2^{-1/2} }}}(-\eta)\prod_{i=k}^n \widehat{g_{t\alpha_i}}(\xi_{i})\widehat{g_{t\alpha_i}}(\xi_{i}+\eta) \frac{dt}{t} \\\nonumber
& + \sum_{j=k}^n \int_r^R \widehat{g_{t\alpha} }(\eta) \widehat{h_{t\alpha_j}}(\xi_{j}) {\widehat{h_{t\alpha_j}}}(-\xi_j-\eta) \prod_{\substack{i=k\\ i\neq j}}^n \widehat{g_{t\alpha_i}}(\xi_{i})\widehat{g_{t\alpha_i}}(\xi_{i}+\eta) \frac{dt}{t}\\ \label{ftc}
& = \pi \big( G_r(\eta,\xi_k,\dots, \xi_n) - G_R(\eta,\xi_k,\dots, \xi_n) \big),
\end{align}
where for $t>0$ we have denoted
$$G_t(\eta,\xi_k,\dots, \xi_n):= \widehat{g_{t\alpha } }(\eta) \prod_{j=k}^n \widehat{g_{t\alpha_j}}(\xi_{j})\widehat{g_{t\alpha_j}}(\xi_{j}+\eta). $$
To see this identity, we use the fundamental theorem of calculus, together with $\widehat{g}(\xi)=e^{-\pi\xi^2}$, which yields that the right hand side of the identity \eqref{ftc} equals
\begin{align*}
& - \int_r^R \pi t\partial_t (G_t(\eta,\xi_k,\dots, \xi_n)) \frac{dt}{t}\\
& = \int_r^R 2\pi^2 t^2 \Big(\alpha ^2\eta^2 + \sum_{j=k}^n \alpha_j^2(\xi_j^2 + (\xi_j+\eta)^2) \Big) \,G_t(\eta,\xi_k,\dots, \xi_n) \frac{dt}{t}.
\end{align*}
Using $\widehat{h}(\xi) = 2\pi \mathbbm{i} \xi \widehat{g}(\xi)$ gives
$$ \widehat{h}(t\alpha 2^{-1/2}\eta) \widehat{h}(t\alpha 2^{-1/2}(-\eta)) 
= (2\pi \mathbbm{i} t\alpha)^2 2^{-1} \eta (-\eta) \widehat{g}(t\alpha 2^{-1/2} \eta) \widehat{g}(t\alpha 2^{-1/2} \eta)
= 2\pi^2 t^2 \alpha^2 \eta^2 \widehat{g}(t\alpha \eta) $$
and
$$ \widehat{h}(t\alpha_j \xi_j) \widehat{h}(t\alpha_j (-\xi_j-\eta))
= 4\pi^2 t^2 \alpha_j^2 \xi_j (\xi_j+\eta) \widehat{g}(t\alpha_j \xi_j) \widehat{g}(t\alpha_j (\xi_j+\eta)), $$
so the left hand side of \eqref{ftc} becomes
\begin{align*}
& \int_r^R \Big( 1+ \alpha^{-2} \sum_{j=k}^{n} \alpha_j^2 \Big) 2\pi^2 t^2 \alpha^2 \eta^2 \,G_t(\eta,\xi_k,\dots, \xi_n) \frac{dt}{t} \\
& + \int_r^R \Big( \sum_{j=k}^n 4\pi^2 t^2 \alpha_j^2 \xi_j (\xi_j+\eta) \Big) \,G_t(\eta,\xi_k,\dots, \xi_n) \frac{dt}{t}.
\end{align*}
A straightforward polynomial identity finally establishes \eqref{ftc}.

The terms on the left hand side of \eqref{ftc} correspond to the terms on the left hand side of \eqref{telescoping}: one only needs to multiply \eqref{ftc} with $\mathcal{F}^{k-1}$ and the complex exponential from \eqref{thetafourier}, \eqref{thetafourier2}, and perform the remaining integrations.
We thus need to show that the corresponding terms for the right hand side of \eqref{ftc} can be bounded by a constant. However, for $t=r$ or $t=R$ we have
\begin{align} \nonumber
\Big| &
\int_{\mathbb{R}^{n-k+1}}
\int_{\mathbb{R}^{2n-2k+2}} \int_{\mathbb{R}^k} \mathcal{F}^{k-1}\\ \nonumber
& g_{t\alpha}(x_0+\dots +x_{k-1}+p_k+\dots + p_{n}) dx_0\dots dx_{k-1}
\\ \label{single-scale2}
 & \Big( \prod_{\substack{i=k}}^{n} g_{t\alpha_i}(x_i^{0} - p_i)g_{t\alpha_i}(x_i^{1} - p_i)
 dx_i^{0} dx_i^{1} \Big) dp_{k}\dots dp_n \Big| \leq \|F_0\|_{{2^n}}^{2^{n-k+1}}\prod_{i=1}^{k-1} \|F_i\|^{2^{n-k+1}}_{{2^{n-i+1}}} = 1,
\end{align}
i.e.\@ these single-scale estimates are uniform in $t>0$ and $\alpha_i>1$. This follows by first introducing new variables $y$, $y_i$, and $q_i$ via $x_{0} = y-x_1-x_2-\dots - x_{k-1}$, $x_i^{0} = x_i^{1}-y_i$, and $p_i=x_i^{1}-q_i$. With these new variables, we first apply H\"older's inequality in $x_1,\dots ,x_{k-1}$, then integrate in $y$, then apply H\"older's inequality in $x_k^{1},\dots, x_n^{1}$, and finally integrate in $y_i$ and $q_i$ for $k\leq i \leq n$.

Inserting \eqref{ftc} into \eqref{thetafourier} and \eqref{thetafourier2}, passing to the spatial side and using the estimate \eqref{single-scale2} we obtain
the desired claim \eqref{telescoping}. This completes the proof of the inductive step.

It remains to establish the base case $k=1$ of the induction, i.e.\@ to estimate $\widetilde{\Lambda}^1_{\alpha,\alpha_k,\dots, \alpha_n}$. Unlike in the inductive step we do not dominate one of the functions $h$. Instead we apply the Cauchy-Schwarz inequality to \eqref{def:lambdaK1}
immediately in such a way that each of the terms on the right hand side invokes cancellative functions $h$. This is possible only in the case $k=1$ because here the integration in the variables $x_0$ and $x_1$ separates.
More precisely, we apply the Cauchy-Schwarz inequality in the integrals over the variables
$x_2^{0},x_2^{1},\dots x_n^{0,}x_n^{1}$, $p_1,\dots ,p_n$, and $t$ to obtain
\begin{align}\label{eq:csbase}
\big( \widetilde{\Lambda}^{1}_{\alpha,\alpha_1,\dots, \alpha_n} \big)^2 \leq \widetilde{\Theta}^{(1)}(F_0)\widetilde{\Theta}^{(1)}(F_1),
\end{align}
where for $1\leq j \leq n$ and a Schwartz function $F$ on $\mathbb{R}^n$ we have set
\begin{align*}
\widetilde{\Theta}^{(j)}(F):= &\int_r^R \nonumber \int_{\mathbb{R}^{n}}
\int_{\mathbb{R}^{2n}} \prod_{(r_{1},\dots,r_n)\in\{0,1\}^{n}}F (x_{1}^{r_{1}},\dots,x_{n}^{r_{n}}) \\
&\, h_{t \alpha_j} (x_j^{0}-p_{j}) {h}_{t\alpha_j} (x_j^{1}-p_{j}) dx_j^{0} dx_j^{1} \Big( \prod_{\substack{i=1\\i\neq j}}^{n} g_{t \alpha_i }(x_i^{0} - p_i)g_{t \alpha_i }(x_i^{1} - p_i)
 dx_i^{0} dx_i^{1} \Big) dp_{1}\dots dp_n \frac{dt}{t} .
\end{align*}

Similarly as in the inductive step, we now have
\begin{align}\label{telescoping2}
\sum_{j=1}^n \widetilde{\Theta}^{(j)}(F) \lesssim 1
\end{align}
for any $F$ with $\|F\|_{{2^n}} =1$.
Namely, $\widetilde{\Theta}^{(j)}(F)$ coincides with $\Theta^{(j)}$ for $k=1$, except for the choice of functions $F$ making up $\mathcal{F}^{k-1}$. 
Moreover,  $\mathcal{F}^{0}$ does not depend on $x_0$, so the integral in $x_0$ is merely the integral of a Gaussian. Likewise, the integral in $x_0$ in the definition of $\Theta$ for $k=1$ is an integral over the derivative of a Gaussian and hence vanishes. Thus claim \eqref{telescoping2} follows analogously to claim \eqref{telescoping}.

It remains to observe that $\widetilde{\Theta}^{(j)} \geq 0$ for each $1\leq j \leq n$, which is again analogous to the proof of the inductive step: simply observe that we are integrating squares of real-valued expressions.
Together with \eqref{telescoping2} this implies
$$\widetilde{\Theta}^{(1)}(F_0),\,\widetilde{\Theta}^{(1)}(F_1) \lesssim 1, $$
which by \eqref{eq:csbase} concludes the proof of the base case $k=1$ of the induction.
\end{proof}

\section{An illustration of the induction steps}
Figures~\ref{fig:splx1}--\ref{fig:splx3} represent the induction scheme for $n=3$.
The polyhedra in Figures~\ref{fig:splx1}--\ref{fig:splx3} represent the structure of $\mathcal{F}^k$
for $k=3$, $2$, and $1$ in this order. The vertices represent the various factors $F_j$ in the definition of $\mathcal{F}^k$, while the
faces represent the arguments in these factors, such that adjacency of a face to a vertex means that the argument appears
in the corresponding factor of $\mathcal{F}^k$.

The passage from left to right polyhedron in each figure represents the effect of the Cauchy-Schwarz inequality \eqref{afterCS}, passing
from a form $\Lambda^k_{\alpha,\alpha_k,\dots ,\alpha_n}$ involving $\mathcal{F}^{k}$ on the left to a form
$\mathcal{M}_t$ or $\Theta^{(k)}$ involving $\mathcal{F}^{k-1}$ on the right.

The shaded faces of the left polyhedra correspond to the variable $x_k$ in $\Lambda^k_{\alpha,\alpha_k,\dots ,\alpha_n}$
appearing in the cancellative function $h$. On the right hand side this variable has bifurcated into two variables $x_k^0$ and $x_k^1$ in
$\Theta^{(k)}$, both of which still carry cancellation.
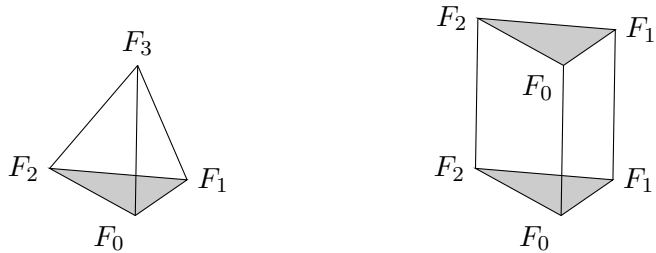
\begin{figure}[htb]
\centering
\begin{tikzpicture}[scale=1.3]
\begin{scope}[rotate = 45]
\coordinate [label=below left:$F_0$] (A) at (0,0,0);
\coordinate [label=right:$F_1$] (B) at (1,0.258819,0.965926);
\coordinate [label=left:$F_2$] (C) at (0,1.22474,0.707107);
\coordinate [label=above:$F_3$] (D) at (1,0.965926,-0.258819);
\draw (D)--(A);
\draw (D)--(B);
\draw (D)--(C);
\draw[fill=black, fill opacity=.2] (A)--(B)--(C)--cycle;
\end{scope}
\end{tikzpicture}
\qquad \qquad \qquad
\begin{tikzpicture}[scale=1.3]
\begin{scope}[rotate = 45]
\coordinate [label=below left:$F_0$] (A) at (0,0,0);
\coordinate [label=right:$F_1$] (B) at (1,0.258819,0.965926);
\coordinate [label=left:$F_2$] (C) at (0,1.22474,0.707107);
\coordinate [label=below left:$F_0$] (E) at (1,0.965926,-0.258819);
\coordinate [label=right:$F_1$] (F) at (2,1.22474,0.707107);
\coordinate [label=left:$F_2$] (G) at (1,2.19067,0.448289);
\draw (A)--(E);
\draw (B)--(F);
\draw (C)--(G);
\draw[fill=black, fill opacity=.2] (A)--(B)--(C)--cycle;
\draw[fill=black, fill opacity=.2] (E)--(F)--(G)--cycle;
\end{scope}
\end{tikzpicture}
\caption{Case $k=3$.} \label{fig:splx1}
\end{figure}

Comparing the right polyhedron in one figure to the left polyhedron in the next figure, the shaded faces move to a different location
indicating the effect of the telescoping estimate \eqref{telescoping}.
Note that the picture depicts only the most important of, in general many, terms in the telescoping
identity. In all but the last figure we have only one shaded face on the left polyhedron, since after domination of one function
$h$ by Gaussians only one function $h$ survives.
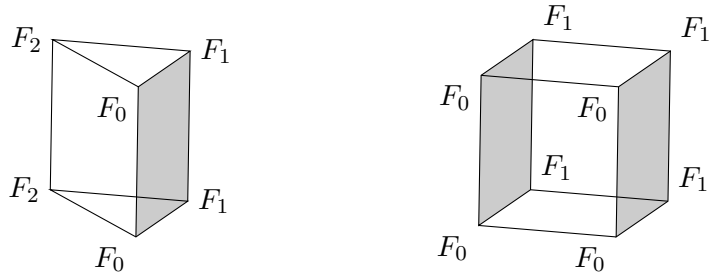
\begin{figure}[htb]
\begin{tikzpicture}[scale=1.3]
\begin{scope}[rotate = 45]
\coordinate [label=below left:$F_0$] (A) at (0,0,0);
\coordinate [label=right:$F_1$] (B) at (1,0.258819,0.965926);
\coordinate [label=left:$F_2$] (C) at (0,1.22474,0.707107);
\coordinate [label=below left:$F_0$] (E) at (1,0.965926,-0.258819);
\coordinate [label=right:$F_1$] (F) at (2,1.22474,0.707107);
\coordinate [label=left:$F_2$] (G) at (1,2.19067,0.448289);
\draw (A)--(E);
\draw (B)--(F);
\draw (C)--(G);
\draw (A)--(B)--(C)--cycle;
\draw (E)--(F)--(G)--cycle;
\fill[black, opacity=.2] (A)--(B)--(F)--(E)--cycle;
\end{scope}
\end{tikzpicture}
\qquad \qquad \qquad
\begin{tikzpicture}[scale=1.3]
\begin{scope}[rotate = 45]
\coordinate [label=below left:$F_0$] (A) at (0,0,0);
\coordinate [label=above right:$F_1$] (B) at (1,0.258819,0.965926);
\coordinate [label=below left:$F_0$] (E) at (1,0.965926,-0.258819);
\coordinate [label=above right:$F_1$] (F) at (2,1.22474,0.707107);
\coordinate [label=below left:$F_0$] (A1) at (-1,0.965926,-0.258819);
\coordinate [label=above right:$F_1$] (B1) at (0,1.22474,0.707107);
\coordinate [label=below left:$F_0$] (E1) at (0,1.93185,-0.517637);
\coordinate [label=above right:$F_1$] (F1) at (1,2.19067,0.448289);
\draw (A)--(A1);
\draw (B)--(B1);
\draw (E)--(E1);
\draw (F)--(F1);
\draw[fill=black, fill opacity=.2] (A)--(B)--(F)--(E)--cycle;
\draw[fill=black, fill opacity=.2] (A1)--(B1)--(F1)--(E1)--cycle;
\end{scope}
\end{tikzpicture}
\caption{Case $k=2$.}
\end{figure}

The last figure corresponds to the base case, which is treated differently. On the one hand we have two shaded
faces of the left polyhedron, and on the other hand the Cauchy-Schwarz inequality does not change the geometry of the polyhedron,
but merely the labeling of the corners. This stabilization of the process is ultimately the reason that the recursion stops.
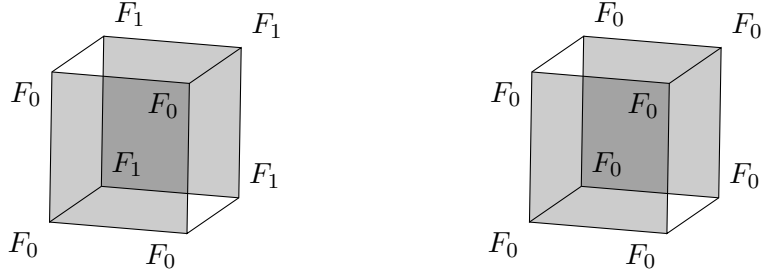
\begin{figure}[htb]
\begin{tikzpicture}[scale=1.3]
\begin{scope}[rotate = 45]
\coordinate [label=below left:$F_0$] (A) at (0,0,0);
\coordinate [label=above right:$F_1$] (B) at (1,0.258819,0.965926);
\coordinate [label=below left:$F_0$] (E) at (1,0.965926,-0.258819);
\coordinate [label=above right:$F_1$] (F) at (2,1.22474,0.707107);
\coordinate [label=below left:$F_0$] (A1) at (-1,0.965926,-0.258819);
\coordinate [label=above right:$F_1$] (B1) at (0,1.22474,0.707107);
\coordinate [label=below left:$F_0$] (E1) at (0,1.93185,-0.517637);
\coordinate [label=above right:$F_1$] (F1) at (1,2.19067,0.448289);
\draw (A)--(A1);
\draw (B)--(B1);
\draw (E)--(E1);
\draw (F)--(F1);
\draw (A)--(B)--(F)--(E)--cycle;
\draw (A1)--(B1)--(F1)--(E1)--cycle;
\fill[black, opacity=.2] (A)--(A1)--(E1)--(E)--cycle;
\fill[black, opacity=.2] (B)--(B1)--(F1)--(F)--cycle;
\end{scope}
\end{tikzpicture}
\qquad \qquad \qquad
\begin{tikzpicture}[scale=1.3]
\begin{scope}[rotate = 45]
\coordinate [label=below left:$F_0$] (A) at (0,0,0);
\coordinate [label=above right:$F_0$] (B) at (1,0.258819,0.965926);
\coordinate [label=below left:$F_0$] (E) at (1,0.965926,-0.258819);
\coordinate [label=above right:$F_0$] (F) at (2,1.22474,0.707107);
\coordinate [label=below left:$F_0$] (A1) at (-1,0.965926,-0.258819);
\coordinate [label=above right:$F_0$] (B1) at (0,1.22474,0.707107);
\coordinate [label=below left:$F_0$] (E1) at (0,1.93185,-0.517637);
\coordinate [label=above right:$F_0$] (F1) at (1,2.19067,0.448289);
\draw (A)--(A1);
\draw (B)--(B1);
\draw (E)--(E1);
\draw (F)--(F1);
\draw (A)--(B)--(F)--(E)--cycle;
\draw (A1)--(B1)--(F1)--(E1)--cycle;
\fill[black, opacity=.2] (A)--(A1)--(E1)--(E)--cycle;
\fill[black, opacity=.2] (B)--(B1)--(F1)--(F)--cycle;
\end{scope}
\end{tikzpicture}
\caption{Case $k=1$.}\label{fig:splx3}
\end{figure}


\section{Dyadic model of the simplex Hilbert transform}
\label{dyadicsection}
In this section we discuss the analogue of Theorem~\ref{thm:mainthm} for the dyadic model of the truncated simplex Hilbert transform.
Define
\begin{align*}
\Lambda_{n,m}^{\textup{d}} := \sum_{l=0}^{m-1} \sum_{(I_0,\dots,I_n)\in\mathcal{I}_l}
\epsilon_{l,I_0,\dots ,I_n}
\int_{(\mathbb{R}_+)^{n+1}} \prod_{i=0}^{n} F_i(x_0,\dots,x_{i-1},x_{i+1},\dots,x_n) 2^{-l} \Big( \prod_{i=0}^{n} \mathbbm{h}_{I_i}(x_i) dx_i \Big) ,
\end{align*}
where $n,m\geq 1$, $\mathbb{R}_+=[0,\infty)$, and for $l\in \mathbb{Z}$ we denote
$$ \mathcal{I}_l := \{(I_0,\dots,I_n) \,:\, 0\in I_0\oplus \dots \oplus I_n,\, I_i\;\textup{dyadic interval},\, I_i\subset\mathbb{R}_+,\; |I_i| = 2^l,\,1\leq i \leq n \}. $$
Here a dyadic interval is any interval of the form $[2^l m,2^l(m+1))$ with $m,l\in \mathbb{Z}$ and $\oplus$ is the addition of the Walsh group; see \cite{ktz:tht} for further details. The otherwise arbitrary coefficients $\epsilon_{l,I_0,\dots ,I_n}$ are assumed to be bounded in the absolute value by $1$ and
we have denoted by $\mathbbm{h}_{I}$ the $\textup{L}^\infty$-normalized Haar function on $I$.
A convenient property of the Haar functions is that
$$ \mathbbm{h}_{I_1\oplus I_2}(x_1\oplus x_2) = \mathbbm{h}_{I_1}(x_1) \mathbbm{h}_{I_2}(x_2) $$
whenever $I_1,I_2$ are dyadic intervals of the same length, $x_1\in I_1$, $x_2\in I_2$, and $I_1\oplus I_2$ is defined to be yet another dyadic interval of that same length whose left endpoint is the $\oplus$-sum of the left endpoints of $I_1$ and $I_2$.
Indeed, this is simply the character property of the more general Walsh functions.
In dyadic models it is common to replace $1/(x_0+\dots+x_n)$ with kernels such as
$$ K(x_0,\dots,x_n) = \sum_{l} \epsilon_{l} 2^{-l} \mathbbm{h}_{[0,2^l)}(x_1\oplus\dots\oplus x_n)
= \sum_{l} \sum_{(I_0,\dots,I_n)\in\mathcal{I}_l} \epsilon_{l} 2^{-l} \prod_{i=0}^{n} \mathbbm{h}_{I_i}(x_i). $$

This time the trivial estimate grows linearly in the number of scales $m$ and we want to improve on this trivial bound with a power less than one.

\begin{theorem}
There exists a finite constant $C$ depending only on $n\geq 1$ such that for any tuple $F_0,\dots, F_n$ of finite linear combinations of Haar functions and any $m\geq 1$ we have
\[ |\Lambda^{\textup{d}}_{n,m}| \leq C m^{1-2^{-n+1}} \|F_0\|_{{2^n}} \prod_{i=1}^n\|F_i\|_{{2^{n-i+1}}}. \]
\end{theorem}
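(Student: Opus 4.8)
The proof parallels that of Theorem~\ref{thm:mainthm}, with the logarithmic scale $\log(R/r)$ replaced by the number of scales $m$ and the Fourier transform replaced by the Walsh--Fourier transform. The case $n=1$ is immediate: by the character property of the Haar functions the defining constraint of $\mathcal{I}_l$ forces $I_0=I_1$, so
\[ \Lambda^{\textup{d}}_{1,m}=\sum_{I}\epsilon_I\,|I|^{-1}\langle F_0,\mathbbm{h}_I\rangle\langle F_1,\mathbbm{h}_I\rangle, \qquad |\epsilon_I|\le 1, \]
the sum running over dyadic intervals $I\subset\mathbb{R}_+$ with $1\le |I|\le 2^{m-1}$, and the Cauchy--Schwarz inequality together with $\sum_I|I|^{-1}|\langle F,\mathbbm{h}_I\rangle|^2=\|F\|_2^2$ bounds this by $\|F_0\|_2\|F_1\|_2$ uniformly in $m$. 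From now on we take $n\ge2$.

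The plan is as follows. First I would normalise $F_0,\dots,F_n$ as in \eqref{norm} and, using the character property of Walsh functions, rewrite $\Lambda^{\textup{d}}_{n,m}$ as a superposition over the scales $0\le l<m$ of single-scale forms testing $\prod_i F_i$ against $2^{-l}\mathbbm{h}_{[0,2^l)}(x_0\oplus\dots\oplus x_n)$, modulated by coefficients bounded by $1$; no ``rough-to-smooth'' reduction is needed, since this kernel is already a sum of bounded, compactly supported pieces. Then I would introduce the dyadic analogues of the auxiliary forms \eqref{def:lambdaK} and \eqref{def:lambdaK1}: keep $\mathcal{F}^k$ from \eqref{fnk} (now built from Haar combinations), replace every $\textup{L}^1$-normalised Gaussian $g_t$ by the $\textup{L}^1$-normalised box function $2^{-l}\mathbbm{1}_{[0,2^l)}$ (equivalently, a conditional expectation onto the dyadic $\sigma$-algebra at scale $l$), every $h_t$ by the $\textup{L}^1$-normalised Haar function $2^{-l}\mathbbm{h}_{[0,2^l)}$, and $\int_r^R(\,\cdot\,)\tfrac{dt}{t}$ by $\sum_{l=0}^{m-1}(\,\cdot\,)$. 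A pleasant feature of the dyadic model is that the dilation parameters $\alpha,\alpha_k,\dots,\alpha_n$ become superfluous: on the one hand $|2^{-l}\mathbbm{h}_{[0,2^l)}|=2^{-l}\mathbbm{1}_{[0,2^l)}$ \emph{pointwise}, which makes the passage from $\widetilde{\Lambda}^k$ to $\Lambda^k$ immediate and removes the need for the analogue of \eqref{dom}; on the other hand the Walsh--Haar multipliers are $\pm1$-valued, so products of two of them at the same scale are exactly box-multipliers, and no ``half-scale'' splitting of the type $h=2^{1/2}h_{2^{-1/2}}\ast g_{2^{-1/2}}$ is required for the telescoping. The target bound for the $k$-th pair of forms is then simply $m^{1-2^{-k+1}}$.

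The induction on $k$ then runs as in the proof of Lemma~\ref{lemma:mainlemma}. In the inductive step I would apply the Cauchy--Schwarz inequality first in the scale variable $l$ (producing the factor $m$ in place of $\log(R/r)$) and then, for each $l$, in all remaining integration variables but $x_k$, reaching the analogue of \eqref{afterCS}; the factor $\mathcal{N}_l\le 1$ follows from H\"older's inequality exactly as in \eqref{Nt}, and it is at this Cauchy--Schwarz step that the arbitrary coefficients are discarded using $|\epsilon_{l,I_0,\dots,I_n}|\le1$, so that all subsequent forms are coefficient-free. Summing the $\mathcal{M}_l$ over $l$ and expanding the square yields dyadic versions of the forms $\Theta^{(j)}$, and the Fourier-analytic identity \eqref{ftc} is replaced by a finite telescoping (summation-by-parts) identity for products of Walsh--Fourier multipliers: the Walsh transforms of $2^{-l}\mathbbm{1}_{[0,2^l)}$ and $2^{-l}\mathbbm{h}_{[0,2^l)}$ are $\textup{L}^\infty$-normalised indicators and Haar functions of the nested dyadic frequency intervals $[0,2^{-l})$, consecutive differences of the box-multipliers are expressible through the Haar-multipliers (the discrete counterpart of $\widehat{h}=2\pi\mathbbm{i}\xi\widehat{g}$), so the product of multipliers attached to the slots of $\mathcal{F}^{k-1}$ telescopes in $l$ and the sum over $0\le l<m$ collapses to boundary terms at $l=0$ and $l=m$. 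Those boundary contributions are single-scale forms bounded by $1$ via H\"older's inequality, once the character property is used to write the kernel as a tensor product of one-variable Haar factors, exactly as in \eqref{single-scale2}. Combined with the nonnegativity of the $\Theta^{(j)}$ (each an integral of a square coming from the previous Cauchy--Schwarz) and the induction hypothesis for $k-1$, this closes the step; the base case $k=1$ is handled by a single Cauchy--Schwarz splitting $\widetilde{\Lambda}^1$ into the product of two forms depending separately on $F_0$ and on $F_1$, each controlled by the same telescoping identity and nonnegativity, precisely as at the end of the proof of Lemma~\ref{lemma:mainlemma}.

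The step demanding the most care is the dyadic telescoping identity that replaces \eqref{ftc}. I would identify the discrete analogue of $t\partial_t$ applied to the product $G_t$ of Gaussians---namely the martingale difference $\mathbb{E}_l-\mathbb{E}_{l+1}$, lifted to a product by a Leibniz-type rule, or on the Walsh side an identity for nested dyadic frequency intervals---and then check that distributing it over the factors reproduces exactly the pattern of terms on the left of the telescoping claim, with one cancellative Haar factor per term matching the slots of $\mathcal{F}^{k-1}$, and that the bounded combinatorial coefficients arising here play the role of the polynomial factors and the prefactor $1+\alpha^{-2}\sum_{j}\alpha_j^2$ in \eqref{ftc}; finally one verifies that the two boundary contributions are precisely the single-scale expressions estimated by H\"older. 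A secondary, purely routine point is the bookkeeping of the coefficients $\epsilon_{l,I_0,\dots,I_n}$ through the first Cauchy--Schwarz, for which only their boundedness is used.
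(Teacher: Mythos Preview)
Your proposal is correct and follows the same inductive architecture as the paper's sketch: the same auxiliary forms $\Lambda^{\textup{d},k}$, the same Cauchy--Schwarz in $l$ and in the spatial variables, the same H\"older bound $\mathcal{N}_l\le1$, nonnegativity of the $\Theta^{(j)}$, and the same treatment of the base case $k=1$. The one substantive difference is in how the telescoping identity is established: you propose to pass to the Walsh--Fourier side and exploit that the multipliers of $2^{-l}\mathbbm{1}_{[0,2^l)}$ and $2^{-l}\mathbbm{h}_{[0,2^l)}$ are nested indicators whose consecutive differences are again indicators, mimicking \eqref{ftc} closely; the paper instead stays entirely on the spatial side and writes down a combinatorial identity \eqref{teldyad}, verified by splitting each interval into its two children and tracking which child-tuples lie in $\mathcal{I}_{l-1}$. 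Your route is a more literal transcription of the continuous proof and makes the analogy with \eqref{ftc} transparent, while the paper's spatial identity is more elementary and avoids any Walsh--Fourier formalism. Either verification yields exactly the same collection of terms (up to bounded combinatorial factors), so the induction closes identically.
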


\begin{proof}[Sketch of proof] Fix positive integers $n,m$ and functions $F_0,\dots, F_n$ normalized as in \eqref{norm}.
In order to perform the structural induction we introduce expressions indexed by $1\leq k\leq n$
\begin{align*}
\Lambda^{\textup{d},k} :=\, \sum_{l=0}^{m-1} \sum_{(I_0,\dots,I_n)\in\mathcal{I}_l}
& \int_{(\mathbb{R}_+)^{2n-2k}} \bigg| \int_{(\mathbb{R}_+)^{k+1}} \mathcal{F}^{k} (2^{-l})^{n-k+1} \Big( \prod_{i=0}^{k} \mathbbm{h}_{I_i}(x_i) dx_i \Big) \bigg|\\
& \Big( \prod_{i=k+1}^{n} \mathbbm{1}_{I_i}(x_i^{0}) \mathbbm{1}_{I_i}(x_i^{1}) dx_i^{0} dx_i^{1} \Big),
\end{align*}
where $\mathcal{F}^k$ is defined as in \eqref{fnk}.
We claim that
\begin{align}\label{est:dyad}
\Lambda^{\textup{d},k} \lesssim m^{1-2^{-k+1}}
\end{align}
for each $1\leq k \leq n$. Since $|\Lambda^{\textup{d}}_{n,m}|\leq \Lambda^{\textup{d},n}$, this then implies the theorem.

We prove \eqref{est:dyad} by induction on $k$ and begin with the inductive step.
Let $2\leq k \leq n$.
Performing the analogous steps from \eqref{afterCS} to \eqref{Nt} we obtain
\begin{align} \label{afterCSdyad}
\big(\Lambda^{\textup{d},k}\big)^2 \lesssim m \sum_{l=0}^{m-1} \mathcal{M}^{\textup{d}}_l,
\end{align}
where
\begin{align*}
\mathcal{M}^{\textup{d}}_l := \ & \sum_{(I_0,\dots,I_n)\in\mathcal{I}_l}
\int_{(\mathbb{R}_+)^{2n-k}} \bigg| \int_{\mathbb{R}_+} \prod_{i=0}^{k-1} \prod_{(r_{k+1},\dots,r_n)\in\{0,1\}^{n-k}} F_i(x_0,\dots,x_{i-1},x_{i+1},\dots,x_k,x_{k+1}^{r_{k+1}},\dots,x_{n}^{r_{n}})\\
& 2^{-l} \mathbbm{h}_{I_k}(x_k) dx_k \bigg|^2
 (2^{-l})^{n-k} \Big( \prod_{i=0}^{k-1} \mathbbm{1}_{I_i}(x_i) dx_i \Big) \Big( \prod_{i=k+1}^{n} \mathbbm{1}_{I_i}(x_i^{0}) \mathbbm{1}_{I_i}(x_i^{1}) dx_i^{0} dx_i^{1} \Big).
\end{align*}
Therefore it remains to control $\sum_{l=0}^{m-1} \mathcal{M}^{\textup{d}}_l$, which can be rewritten, in analogy with display \eqref{defthetaj}, as
\begin{align}\label{form:mjd}
& \sum_{l=0}^{m-1}\sum_{(I_0,\dots,I_n)\in\mathcal{I}_l}
\int_{(\mathbb{R}_+)^{2n-k+2}} \mathcal{F}^{k-1} \\ \nonumber
& (2^{-l})^{n-k+2} \Big( \prod_{i=0}^{k-1} \mathbbm{1}_{I_i}(x_i) dx_i \Big)
\Big( \mathbbm{h}_{I_k}(x_k^{(0)}) \mathbbm{h}_{I_k}(x_k^{(1)}) dx_k^{(0)} dx_k^{(1)} \Big)
\Big( \prod_{i=k+1}^{n} \mathbbm{1}_{I_i}(x_i^{(0)}) \mathbbm{1}_{I_i}(x_i^{(1)}) dx_i^{(0)} dx_i^{(1)} \Big) .
\end{align}
The identity \eqref{ftc} is now replaced by the dyadic ``telescoping'' identity
\begin{align}\nonumber
& \sum_{(I_0,\dots,I_n)\in\mathcal{I}_l} \bigg( \Big( \prod_{i=0}^{k-1} \mathbbm{h}_{I_i}(x_i) \Big)
\Big( \prod_{i=k}^{n} \big( \mathbbm{1}_{I_i}(x_i^{(0)}) \mathbbm{h}_{I_i}(x_i^{(1)}) + \mathbbm{h}_{I_i}(x_i^{(0)}) \mathbbm{1}_{I_i}(x_i^{(1)}) \big) \Big) \\ 
\nonumber
& \qquad\qquad\quad + \Big( \prod_{i=0}^{k-1} \mathbbm{1}_{I_i}(x_i) \Big)
\Big( \prod_{i=k}^{n} \big( \mathbbm{1}_{I_i}(x_i^{(0)}) \mathbbm{1}_{I_i}(x_i^{(1)}) + \mathbbm{h}_{I_i}(x_i^{(0)}) \mathbbm{h}_{I_i}(x_i^{(1)}) \big) \Big) \bigg) \\ \label{teldyad}
& = 2^{n-k+2} \sum_{(I_0,\dots,I_n)\in\mathcal{I}_{l-1}}
\Big( \prod_{i=0}^{k-1} \mathbbm{1}_{I_i}(x_i) \Big)
\Big( \prod_{i=k}^{n} \mathbbm{1}_{I_i}(x_i^{(0)}) \mathbbm{1}_{I_i}(x_i^{(1)}) \Big).
\end{align}
In order to verify it, we split each interval $I_i$ on the left hand side into its left ``child'' $I_i^{0}$ and its right ``child'' $I_i^{1}$, so that \eqref{teldyad} turns into
\begin{align*}
& \frac{1}{2}\sum_{(I_0,\dots,I_n)\in\mathcal{I}_l} \bigg( \Big( \prod_{i=0}^{k-1} \big(\mathbbm{1}_{I_i^0}(x_i)-\mathbbm{1}_{I_i^1}(x_i)\big) \Big)
\Big( \prod_{i=k}^{n} \big( \mathbbm{1}_{I_i^0}(x_i^{(0)}) \mathbbm{1}_{I_i^0}(x_i^{(1)}) - \mathbbm{1}_{I_i^1}(x_i^{(0)}) \mathbbm{1}_{I_i^1}(x_i^{(1)}) \big) \Big) \\
& \qquad\qquad\qquad + \Big( \prod_{i=0}^{k-1} \big(\mathbbm{1}_{I_i^0}(x_i)+\mathbbm{1}_{I_i^1}(x_i)\big) \Big)
\Big( \prod_{i=k}^{n} \big( \mathbbm{1}_{I_i^0}(x_i^{(0)}) \mathbbm{1}_{I_i^0}(x_i^{(1)}) + \mathbbm{1}_{I_i^1}(x_i^{(0)}) \mathbbm{1}_{I_i^1}(x_i^{(1)}) \big) \Big) \bigg) \\
& = \sum_{(I_0,\dots,I_n)\in\mathcal{I}_{l-1}}
\Big( \prod_{i=0}^{k-1} \mathbbm{1}_{I_i}(x_i) \Big)
\Big( \prod_{i=k}^{n} \mathbbm{1}_{I_i}(x_i^{(0)}) \mathbbm{1}_{I_i}(x_i^{(1)}) \Big).
\end{align*}
This identity becomes apparent once we observe that the tuple $(I_0^{s_0},\dots,I_n^{s_n})$ for some $(s_0,\dots,s_n)\in\{0,1\}^{n+1}$ belongs to $\mathcal{I}_{l-1}$ if and only if the number of $s_i$ that are equal to $1$ is even.

What we have in \eqref{form:mjd} can be recognized as one of the terms beginning with $\mathbbm{1}$'s in \eqref{teldyad}, after multiplying \eqref{teldyad} by $\mathcal{F}^{k-1}$, integrating and finally summing over the intervals and $l$. All terms in the second line of \eqref{teldyad} lead to non-negative expressions analogous to \eqref{defthetaj}, so it suffices to control their sum. What remains after summing the above identity in $l$, up to single-scale quantities analogous to \eqref{single-scale2}, are the terms beginning with $\mathbbm{h}$'s. By the triangle inequality, these terms lead to at most $2^n$ times
\begin{align*}
 \sum_{l=0}^{m-1} \sum_{(I_0,\dots,I_n)\in\mathcal{I}_l}
& \int_{(\mathbb{R}_+)^{2n-2k+2}} \bigg| \int_{(\mathbb{R}_+)^{k}} \mathcal{F}^{k-1} \, (2^{-l})^{n-k+2} \Big( \prod_{i=0}^{k-1} \mathbbm{h}_{I_i}(x_i) dx_i \Big) \bigg| \\
& \Big( \prod_{i=k}^{n} \mathbbm{1}_{I_i}(x_i^{(0)}) \mathbbm{1}_{I_i}(x_i^{(1)}) dx_i^{(0)} dx_i^{(1)} \Big) ,
\end{align*}
which can be recognized as $\Lambda^{\textup{d},k-1}$. Applying the induction hypothesis combined with \eqref{afterCSdyad} finishes the inductive step.

The base case $k=1$ can be deduced similarly as in the previous section.
\end{proof}


\section*{Acknowledgments}
P. D. and C. T. are supported by the Hausdorff Center for Mathematics.
V. K. is supported in part by the Croatian Science Foundation under the project 3526.



\begin{thebibliography}{99}
\bibitem{pd:L4}
P. Durcik, \emph{An $L^4$ estimate for a singular entangled quadrilinear form}, Math. Res. Lett. {\bf 22} (2015), no. 5, 1317--1332.

\bibitem{dkst:nvea}
P. Durcik, V. Kova\v{c}, K. A. \v{S}kreb, C. Thiele, \emph{Norm-variation of ergodic averages with respect to two commuting transformations}, (2016) preprint, available at arXiv:1603.00631.

\bibitem{vk:tp}
V. Kova\v{c}, \emph{Boundedness of the twisted paraproduct}, Rev. Mat. Iberoam. {\bf 28} (2012), no. 4, 1143--1164.

\bibitem{ktz:tht}
V. Kova\v{c}, C. Thiele, P. Zorin-Kranich, \emph{Dyadic triangular Hilbert transform of two general and one not too general function}, Forum of Mathematics, Sigma {\bf 3} (2015), e25.

\bibitem{tt:mht}
T. Tao, \emph{Cancellation for the multilinear Hilbert transform}, Collect. Math. {\bf 67} (2016), no. 2, 191--206.

\bibitem{pz:splx}
P. Zorin-Kranich, \emph{Cancellation for the simplex Hilbert transform} (2015), To appear in Math. Res. Lett., available at arXiv:1507.02436.
\end{thebibliography}
\end{document}